\DeclareMathOperator{\lcm}{lcm}
\newtheorem{theorem}{Theorem}[section]
\newtheorem{proposition}[theorem]{Proposition}
\newtheorem{conjecture}[theorem]{Conjecture}
\newtheorem{lemma}[theorem]{Lemma}
\theoremstyle{definition}
\theoremstyle{remark}
\title{A Question of Erd\H{o}s and Graham on Covering Systems}
\author{Sarosh Adenwalla\thanks{Department of Computer Science, University of Liverpool, UK, \texttt{sarosh.adenwalla@liverpool.ac.uk}, \orcidlink{0009-0009-8582-1281}}}
\date{October 2024}
\begin{document}

\maketitle
\begin{abstract}
    Erd\H{o}s and Graham \cite{erdHos1980old} asked if there exists an $n$ such that the divisors of $n$ greater than 1 are the moduli of a distinct covering system with the following property: If there exists an integer which satisfies two congruences in the system, $a\mod d$ and $a'\mod d'$, then $\gcd(d,d')=1$. We show that such an $n$ does not exist. This problem is part of Problem \# 204 on the website www.erdosproblems.com, compiled and maintained by Thomas Bloom.
\end{abstract}

\section{Introduction}
A \textit{covering system} is a set of congruences, $\{a_1\mod d_1,\ldots,a_t\mod d_t\}$, such that every integer is equivalent to some $a_i\mod d_i$ for some $1\leq i\leq t$. A covering system is \textit{distinct} if $1<d_1<d_2<\ldots<d_t$. Two congruences \textit{overlap} if there is an integer equivalent to both congruences.

There have been many questions asked about distinct covering systems, the most well-known being whether $d_1$ can be arbitrarily large (asked by Erd\H{o}s \cite{erdos1950integers}), whether $d_i|d_j$ must occur for some $i\neq j$ (asked by Schinzel \cite{schinzel1967reducibility}) and whether it is possible for all $d_i$ to be odd (asked by Erd\H{o}s and Selfridge \cite[Section F13]{guy2004unsolved}). Hough answered the first question in \cite{hough2015solution}, proving that $d_1$ must be at most $10^{16}$. This bound was improved to $616$,$000$ in \cite{balister2022erdHos} and the second question was answered affirmatively in \cite{balister2022erdHos} as well. The third question is still open, however there has been much progress towards an answer. It is known that for any odd distinct covering system, there must be some $d_i$ divisible by 3 \cite{Hough} and in fact it has been shown that there must be some $d_i$ divisible by 9, or some $d_i$ divisible by 3 and some $d_j$ divisible by 5 \cite{balister2022erdHos}. It is also known that not all $d_i$ can be square-free \cite{Morris} in such a covering system.

We note that using the result in \cite{Hough}, that any distinct covering system must have a modulus divisible by 2 or 3, does shorten our proof somewhat. However, as the result does not require such a powerful tool and in the interests of having a self-contained proof, we avoid using it.

We say a set of congruences (or a congruence set), $\{a_1\mod d_1,\ldots,a_t\mod d_t\}$,  is \textit{coprime disjoint} (CD) if whenever two congruences in the set $a_i\mod d_i$ and $a_j\mod d_j$ overlap for $i\neq j$, we have $\gcd(d_i,d_j)=1$. All congruence sets that we consider will be distinct.

Furthermore, we say that $n$ is \textit{non-intersecting} if there exist integers $\{a_d \,:\, d|n,d>1\}$ such that $\{a_d\mod d\,:\, d|n, d>1\}$ is a CD congruence set. We say this is a CD congruence set of $n$.

Finally, we say $n$ is \textit{CD covering} if there exist integers $\{a_d \,:\, d|n,d>1\}$ such that $\{a_d\mod d\,:\, d|n, d>1\}$ is a CD distinct covering system. It is clear that any CD covering $n$ is non-intersecting.

Erd\H{o}s and Graham \cite{erdHos1980old} posited that a CD covering $n$ probably does not exist and determining this is part of Problem \#204 in \cite{Bloom}. We prove that this is correct, that is, there are no CD covering $n$.

We also provide a necessary condition for $n$ to be non-intersecting and prove certain families of $n$ are non-intersecting.

For convenience, from here on when referring to the divisors of $n$ we will not include 1. 

\textbf{Acknowledgements:} We wish to thank Stijn Cambie for his insights and examples on a related question. We additionally wish to thank the referee for their helpful comments and corrections.

\section{Preliminaries}

 Let $A=\{a_1\mod d_1,\ldots,a_t \mod d_t\}$ be a congruence set. An integer, $x$, is \textit{covered} by $a_i\mod d_i$ if $x\equiv a_i\mod d_i$. Similarly, $x$ is \textit{covered} by $A$ if there exists $1\leq i\leq t$ such that $x\equiv a_i\mod d_i$. A residue class, $a \mod l$, is covered by $A$ if every integer $x\equiv a\mod l$ is covered by $A$. Note that if $\lcm(d_1,\dots,d_t)|l$ and some integer $x\equiv a \mod l$ is covered by $A$, then the residue class $a\mod l$ is covered by $A$. This follows as $$a+lr=x=a_i+d_ir'$$ for some $1\leq i\leq t$ and $r,r'\in\mathbb{Z}$. As $d_i|l$, we have $$a+lk=a_i+d_i(r'+\frac{l}{d_i}(k-r))$$ for any $k\in\mathbb{Z}$.

 Let $A(n)$ be the number of integers that are less than or equal to $n$ and covered by $A$. Then the \textit{density} of integers that are covered by $A$ is $$\delta(A)=\lim_{n\rightarrow\infty} \frac{A(n)}{n}.$$ 
 
 Let $D=\lcm(d_1,\ldots,d_t)$ and $m$ be the number of residue classes mod $D$ that are covered by $A$. Then as $A(n)=m\frac{n}{D}+O(D)$, we see that $$\delta(A)=\frac{m}{D}.$$ So the fraction of residues classes mod $D$ that are covered by $A$ is equal to $\delta(A)$. Clearly $\delta(A)=1$ if and only if $A$ is a covering set.

We adapt the proof of \cite[Lemma 2.3]{simpson1986exact} in order to obtain an expression for the density of a CD congruence set. 

\begin{lemma}\label{good}
    Let $A=\{a_1\mod d_1,\ldots,a_t \mod d_t\}$ be a CD congruence set. Then 
    \begin{equation}\label{1}
        \delta(A)= \sum_{i=1}^t \frac{1}{d_i}-\sum_{\substack {1\leq i_1<i_2\leq t,\\ \gcd(d_{i_1},d_{i_2})=1}}\frac{1}{d_{i_1}\cdot d_{i_2}}+\sum_{\substack {1\leq i_1<i_2<i_3\leq t,\\ \prod_{1\leq j<k\leq 3}\gcd(d_{i_j},d_{i_k})=1}}\frac{1}{d_{i_1}\cdot d_{i_2}\cdot d_{i_3}}-\ldots,
         \end{equation}
    where the $s$-th sum is over every set of $s$ pairwise coprime $d_i$.
\end{lemma}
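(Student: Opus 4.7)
The plan is to prove this by inclusion-exclusion, working inside $\mathbb{Z}/D\mathbb{Z}$ where $D=\lcm(d_1,\ldots,d_t)$. Since each congruence $a_i \bmod d_i$ corresponds to exactly $D/d_i$ residue classes mod $D$, and the density equals the fraction of residues mod $D$ that are covered (as noted in the preliminaries), it suffices to count $|\bigcup_{i} (a_i \bmod d_i) \cap [0,D)|$ via standard finite inclusion-exclusion, then divide by $D$.

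The essential step will be to analyze, for each subset $S = \{i_1,\ldots,i_s\} \subseteq \{1,\ldots,t\}$, the intersection $B_S = \bigcap_{k=1}^s (a_{i_k} \bmod d_{i_k})$. I claim $B_S \neq \emptyset$ if and only if the moduli $d_{i_1},\ldots,d_{i_s}$ are pairwise coprime. For the ``only if'' direction, if some pair satisfies $\gcd(d_{i_j},d_{i_k})>1$, then the CD hypothesis (in its contrapositive form) forces $a_{i_j} \bmod d_{i_j}$ and $a_{i_k} \bmod d_{i_k}$ to be disjoint, so already this pairwise intersection is empty. For the ``if'' direction, if the moduli are pairwise coprime then the Chinese Remainder Theorem produces an integer simultaneously satisfying all $s$ congruences, so $B_S$ is nonempty; moreover CRT tells us $B_S$ is a single residue class modulo $d_{i_1}\cdots d_{i_s}$, which contains $D/(d_{i_1}\cdots d_{i_s})$ residues mod $D$, contributing density $1/(d_{i_1}\cdots d_{i_s})$.

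Combining these observations with the inclusion-exclusion formula
\[
\delta(A) \;=\; \sum_{\emptyset \neq S \subseteq \{1,\ldots,t\}} (-1)^{|S|+1}\,\delta(B_S),
\]
every subset $S$ whose moduli are not pairwise coprime drops out, and every surviving $S$ of size $s$ contributes $(-1)^{s+1}/\prod_{i\in S} d_i$. Grouping by $s$ yields exactly the right-hand side of \eqref{1}.

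The only real obstacle is justifying inclusion-exclusion rigorously for densities; I plan to sidestep this by passing to residue classes mod $D$, where the identity is the usual finite inclusion-exclusion and the translation to density is immediate from the observation recorded just before the lemma statement. The CRT step and the CD-based emptiness argument are both short, so the bulk of the write-up is simply bookkeeping the signs and index sets in the resulting sum.
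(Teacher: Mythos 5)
Your proposal is correct and follows essentially the same route as the paper: pass to residue classes mod $D=\lcm(d_1,\ldots,d_t)$, apply finite inclusion--exclusion, use the CD property to kill every intersection whose moduli share a common factor, and use the Chinese Remainder Theorem to show each pairwise-coprime intersection is a single class mod $d_{i_1}\cdots d_{i_s}$, hence has $D/(d_{i_1}\cdots d_{i_s})$ classes mod $D$. No gaps; the write-up would only need to make the sign bookkeeping explicit, exactly as the paper does.
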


Note that we consider empty sums to be 0. In particular, if the moduli of $A$ are the divisors of a non-intersecting $n$, then the number of sums in \eqref{1} is equal to the number of distinct prime divisors of $n$. 

\begin{proof}[Proof of Lemma \ref{good}]
    Let $A_i$ be the set of residue classes mod $D$ covered by $a_i\mod d_i$. Let $N(i_1,\ldots,i_s)=|\bigcap_{j=1}^s A_{i_j}|$. By the inclusion-exclusion principle, the number of residue classes covered by $A$ is $$\sum_{i=1}^t N(i)-\sum_{1\leq i<j\leq t}N(i,j)+\sum_{1\leq i<j<k\leq t} N(i,j,k)+\ldots.$$

    Let $e(i,j)=1$ if $a_i\mod d_i$ and $a_j\mod d_j$ overlap and $0$ otherwise. As $A$ is a CD congruence set, $a_i\mod d_i$ and $a_j\mod d_j$ cannot overlap if $\gcd(d_i,d_j)>1$. If $\gcd(d_i,d_j)=1$, then by the Chinese Remainder Theorem, $a_i\mod d_i$ and $a_j\mod d_j$ must overlap. Therefore $e(i,j)=1$ if and only if $\gcd(d_i,d_j)=1$.
    
    By the Chinese Remainder Theorem, if $\gcd(d_{i_j},d_{i_k})=1$ for $1\leq j<k\leq s$ then there is a unique residue class mod $d_{i_1}\cdot\ldots\cdot d_{i_s}$ that satisfies all congruences $a_{i_1}\mod d_{i_1},\ldots, a_{i_s}\mod d_{i_s}$. It follows that there are exactly $\frac{D}{d_{i_1}\cdot\ldots\cdot d_{i_s}}$ residue classes mod $D$ in $\bigcap_{j=1}^s A_{i_j}$.
    
    Therefore $\bigcap_{j=1}^s A_{i_j}\not= \emptyset$ if and only if $\prod_{1\leq j<k\leq s} e(i_j,i_k)=1$. Then the number of residue classes mod $D$ that is covered by $A$ is 
    $$ \sum_{i=1}^t \frac{D}{d_i}-\sum_{\substack {1\leq i_1<i_2\leq t,\\ \gcd(d_{i_1},d_{i_2})=1}}\frac{D}{d_{i_1}\cdot d_{i_2}}+\sum_{\substack {1\leq i_1<i_2<i_3\leq t,\\ \prod_{1\leq j<k\leq 3}\gcd(d_{i_j},d_{i_k})=1}}\frac{D}{d_{i_1}\cdot d_{i_2}\cdot d_{i_3}}-\ldots,$$
    and dividing by $D$ gives \eqref{1}.
\end{proof}

Note that Lemma \ref{good} shows that the density of integers covered by a CD congruence set is entirely determined by the moduli and not the specific residues associated with them.

It will often be convenient to have a simpler bound. The following result is often stated without proof, we include one for clarity.

\begin{lemma}\label{P1}
    For any covering set, $A=\{a_1\mod d_1,\ldots,a_t\mod d_t\}$, we have $$\sum_{i=1}^t \frac{1}{d_i}\geq 1.$$
\end{lemma}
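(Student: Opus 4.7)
The plan is to run the same residue-class counting argument already set up in the preliminaries, but to throw away the inclusion--exclusion corrections and use only a union bound. Set $D=\lcm(d_1,\ldots,d_t)$. Since $d_i\mid D$, the congruence $a_i\bmod d_i$ corresponds to exactly $D/d_i$ residue classes modulo $D$. Because $A$ is a covering set, every single one of the $D$ residue classes mod $D$ lies in at least one of these $t$ sets. Summing sizes and using that the sets cover their union gives
$$D \;\le\; \sum_{i=1}^{t}\frac{D}{d_i},$$
and dividing by $D$ yields $\sum_{i=1}^{t}1/d_i\ge 1$.

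Equivalently, phrased in the density language introduced just before the lemma: each residue class $a_i\bmod d_i$ has density $1/d_i$, the density function is finitely subadditive, and $A$ being a covering set means $\delta(A)=1$, so $1=\delta(A)\le\sum_{i=1}^{t}\delta(a_i\bmod d_i)=\sum_{i=1}^{t}1/d_i$. Either framing gives the same one-line argument, and the inequality is strict precisely when some residue class mod $D$ is covered by more than one congruence.

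There is essentially no obstacle to overcome here: the claim is a pure union bound and needs neither the CD hypothesis nor any structural information about the $d_i$ beyond the fact that they all divide $D$. The only thing worth being a bit careful about is confirming that the ``naive'' upper bound $|A_1\cup\cdots\cup A_t|\le\sum|A_i|$ (for the sets $A_i$ of residues mod $D$ covered by $a_i\bmod d_i$) is indeed all that is required, and that $|A_i|=D/d_i$ holds with equality; both are immediate from the definitions.
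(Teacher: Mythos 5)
Your proposal is correct and is essentially identical to the paper's own proof: both count the $D/d_i$ residue classes mod $D$ covered by each congruence, apply the union bound to conclude $D\le\sum_i D/d_i$ (equivalently $\delta(A)\le\sum_i 1/d_i$), and use $\delta(A)=1$ for a covering set. No gaps.
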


\begin{proof}
    The residue classes mod $D$ covered by $a_i\mod d_i$ are exactly $a_i + k\cdot d_i\mod D$ for $1\leq k\leq \frac{D}{d_i}$. Therefore there are $\frac{D}{d_i}$ residue classes mod $D$ covered by $a_i\mod d_i$. So there are at most $ \sum_{i=1}^t \frac{D}{d_i}$ residue classes mod $D$ covered by $A$. It follows that $$\delta(A)\leq \sum_{i=1}^t \frac{1}{d_i}.$$ As $A$ is a covering set, we have $\delta(A)=1$ and so the result follows.
\end{proof}

Let $n=p_1^{\alpha_1}p_2^{\alpha_2}\ldots p_s^{\alpha_s}$. We will repeatedly make use of the following identity.

\begin{proposition}[Folklore]
    \begin{equation}\label{EulerProd}\sum_{d|n,d>1}\frac{1}{d}=-1+\sum_{d|n}\frac{1}{d}=-1+\prod_{i=1}^s\sum_{j=0}^{\alpha_i}\frac{1}{p_i^j},\end{equation}
\end{proposition}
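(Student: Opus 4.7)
The first equality is immediate: the sum $\sum_{d \mid n, d>1} 1/d$ differs from $\sum_{d \mid n} 1/d$ exactly by the contribution of the divisor $d=1$, which equals $1$.

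For the second equality, my plan is to exploit the multiplicativity of $1/d$ together with unique factorization. Expanding the product on the right-hand side formally gives
\begin{equation*}
\prod_{i=1}^s \sum_{j=0}^{\alpha_i} \frac{1}{p_i^j} = \sum_{j_1=0}^{\alpha_1} \cdots \sum_{j_s=0}^{\alpha_s} \frac{1}{p_1^{j_1} p_2^{j_2} \cdots p_s^{j_s}}.
\end{equation*}
I would then observe that the map $(j_1,\ldots,j_s) \mapsto p_1^{j_1}\cdots p_s^{j_s}$ is a bijection between tuples with $0 \leq j_i \leq \alpha_i$ and divisors $d$ of $n$, by the fundamental theorem of arithmetic. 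Hence the expanded sum is exactly $\sum_{d\mid n} 1/d$, and subtracting $1$ for the term $d=1$ yields the claim.

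There is no real obstacle here; the only thing to be careful about is that the bijection is well-defined (every such product divides $n$, and conversely every divisor of $n$ has prime factors among $p_1,\ldots,p_s$ with multiplicity bounded by $\alpha_i$), which follows directly from unique factorization.
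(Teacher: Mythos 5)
Your proof is correct and follows essentially the same route as the paper's: expand the product into a sum over exponent tuples and match those tuples bijectively (via unique factorization) with the divisors of $n$, then subtract the $d=1$ term. The paper phrases the bijection as the distinctness of the expanded terms and the identification of $\{p_1^{\beta_1}\cdots p_s^{\beta_s}: 0\leq\beta_i\leq\alpha_i\}$ with the divisor set, but the substance is identical.
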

\begin{proof}
    The first equality of \eqref{EulerProd} is clear to see, so it suffices to prove the second equality. Note that $$\prod_{i=1}^s\sum_{j=0}^{\alpha_i}\frac{1}{p_i^j}=\prod_{i=1}^s\left(1+\frac{1}{p_i}+\frac{1}{p_i^2}+\ldots+\frac{1}{p_i^{\alpha_i}}\right).$$ 
    After expanding the bracket on the RHS the terms are all the reciprocals of integers of the form $p_1^{\beta_1}p_2^{\beta_2}\ldots p_s^{\beta_s}$ for $0\leq \beta_i\leq \alpha_i$. Additionally, any two of these terms are distinct, as for any two of them there must exist a $1\leq i\leq s$ such that different powers of $p_i$ divide their denominators. Therefore, letting $S=\{p_1^{\beta_1}p_2^{\beta_2}\ldots p_s^{\beta_s}\,:\,\ 0\leq \beta_i\leq \alpha_i\}$, we see that $$\prod_{i=1}^s\sum_{j=0}^{\alpha_i}\frac{1}{p_i^j}=\sum_{k\in S} \frac{1}{k}.$$  As the divisors of $n$ are precisely the integers of the form $p_1^{\beta_1}p_2^{\beta_2}\ldots p_s^{\beta_s}$ for $0\leq \beta_i\leq \alpha_i$, we have $$\sum_{k\in S}\frac{1}{k}=\sum_{d|n}\frac{1}{d},$$ and this completes the proof. 
\end{proof}

We also often use the formula for the sum of an infinite geometric series to bound the RHS of \eqref{EulerProd}.

\section{CD covering \texorpdfstring{$n$}{n}}

\begin{lemma}\label{nice}
    Let $n>1$ be non-intersecting. If $p$ is the smallest prime that divides $n$, then $\frac{n}{p}$ has less than $p$ distinct prime divisors.
    
\end{lemma}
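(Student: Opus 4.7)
The plan is to derive a contradiction from the CD condition by exhibiting many divisors of $n$ whose pairwise greatest common divisors are all exactly $p$. Let $p = p_1 < p_2 < \ldots < p_s$ be the distinct prime divisors of $n$, and consider the $s$ divisors $D_1 = p$ and $D_i = p p_i$ for $2 \leq i \leq s$. A short computation gives $\gcd(D_i, D_j) = p$ for every $i \neq j$. Since $n$ is non-intersecting and $\gcd(D_i, D_j) = p > 1$, the CD property guarantees that $a_{D_i} \mod D_i$ and $a_{D_j} \mod D_j$ do not overlap, which is the same as $a_{D_i} \not\equiv a_{D_j} \mod p$. Hence $a_{D_1}, \ldots, a_{D_s}$ are $s$ pairwise distinct residues modulo $p$, so $s \leq p$.

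To finish I would split on the $p$-adic valuation of $n$. If $p$ exactly divides $n$, then $n/p$ has $s - 1 \leq p - 1 < p$ distinct prime divisors and the conclusion is immediate. If instead $p^2 \mid n$, then $n/p$ still has $s$ distinct prime divisors, so the claim now requires the strict bound $s < p$. Suppose for contradiction that $s = p$; then $a_{D_1}, \ldots, a_{D_s}$ exhaust all $p$ residue classes modulo $p$. I would then introduce the additional divisor $D_0 = p^2$, which exists because $p^2 \mid n$: the same short calculation gives $\gcd(D_0, D_i) = p$ for each $1 \leq i \leq s$, and so the CD condition forces $a_{D_0} \not\equiv a_{D_i} \mod p$ for every $i$. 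But no residue modulo $p$ is then available for $a_{D_0}$, a contradiction.

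The argument is purely combinatorial and hinges on a single idea: pick divisors whose pairwise gcds are \emph{exactly} $p$ rather than a larger common factor, so that the CD constraint translates cleanly into the statement that the relevant residues modulo $p$ are distinct. I do not anticipate any serious obstacle. The only point requiring care is the case split on whether $p^2 \mid n$, since in the case $p^2 \mid n$ one needs the extra divisor $D_0 = p^2$ to strengthen the non-strict bound $s \leq p$ into the strict bound $s < p$.
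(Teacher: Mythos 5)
Your proof is correct and takes essentially the same approach as the paper: both arguments pigeonhole the residues mod $p$ of the congruences attached to the divisors $p$ and $pq$ for primes $q\mid \frac{n}{p}$, using the fact that two such congruences overlap if and only if they agree mod $p=\gcd$ of their moduli. The paper merely folds your two cases into one by letting $q=p$ be among those primes when $p^2\mid n$ (so that $p^2$ plays the role of your $D_0$), and it writes out the CRT construction of a common solution explicitly rather than citing the general solvability criterion.
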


\begin{proof}
    Let $n$ be non-intersecting and $\frac{n}{p}$ have at least $p$ distinct prime divisors. So $q_1,\ldots q_p$ are distinct primes that divide $\frac{n}{p}$ and therefore $pq_1,\ldots pq_p$ all divide $n$. 

    We claim that $a_{pq_i}\equiv a_{pq_j}\mod p$ for some $i\neq j$. If not, then all $a_{pq_i}\mod p$ are distinct for $1\leq i\leq p$. As there are only $p$ distinct residues mod $p$, by the pigeonhole principle $a_p\equiv a_{pq_i}\mod p$ for some $1\leq i\leq p$. However then $a_{pq_i}\equiv a_{p}\mod p$ and $a_{pq_i}\equiv a_{pq_i}\mod pq_i$, which contradicts $n$ being non-intersecting as $\gcd(pq_i,p)=p>1$.

    So $a_{pq_i}\equiv a_{pq_j}\equiv b\mod p$ for some $i\neq j$ and $b\in \{0,1,\ldots,p-1\}$. By the Chinese Remainder Theorem, there exists $c$ such that $c\equiv \frac{a_{pq_i}-b}{p}\mod q_i$ and $c\equiv \frac{a_{pq_j}-b}{p}\mod q_j$ as $\gcd(q_i,q_j)=1$. So $pc+b\equiv a_{pq_i}\mod pq_i$ and $pc+b\equiv a_{pq_j}\mod pq_j$. This contradicts $n$ being non-intersecting as $\gcd(pq_i,pq_j)=p>1$.
\end{proof}

By Lemma \ref{nice}, we have two possible cases for non-intersecting $n$, based on whether $p\nmid \frac{n}{p}$ or $p|\frac{n}{p}$.
   \begin{enumerate}
        \item\label{item:Case1} $n=pq_1^{\alpha_1}\ldots q_{s}^{\alpha_{s}}$ for $0\leq s\leq p-1$ where $p<q_1<\ldots<q_s$ are distinct primes and $\alpha_i\geq 1$ for $1\leq i\leq s$.

        or
        \item\label{item:Case2} $n=p^{\alpha_p}q_1^{\alpha_1}\ldots q_{s}^{\alpha_{s}}$ for $0\leq s\leq p-2$ where $p<q_1<\ldots<q_s$ are distinct primes, $\alpha_p\geq 2$ and $\alpha_i\geq 1$ for $1\leq i\leq s$.
    \end{enumerate} 

We can now prove that there are no CD covering $n$.

\begin{theorem}\label{T1}
    There does not exist any $n$ such that a congruence set of the form $\{a_d\mod d\,:\,d|n,d>1\}$, where $a_d\mod d$ and $a_{d'}\mod d'$ overlap only if $\gcd(d,d')=1$, is a covering set.
\end{theorem}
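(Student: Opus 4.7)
The plan is to assume for contradiction that some $n$ is CD covering (so the associated congruence set $A$ satisfies $\delta(A)=1$) and show $\delta(A)<1$ in every shape of $n$ permitted by Lemma~\ref{nice}, giving the desired contradiction.

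For the case when the smallest prime is $p=2$, Lemma~\ref{nice} restricts $n$ to one of $2$, $2q^{\alpha}$ with $q$ an odd prime, or $2^{\alpha_p}$ with $\alpha_p\ge 2$. In each such shape the divisors of $n$ have at most two distinct prime supports, so Lemma~\ref{good}'s inclusion-exclusion collapses. For $n=2q^{\alpha}$ the only pairwise coprime subsets of divisors $>1$ are the pairs $\{2,q^{j}\}$ with $1\le j\le\alpha$, and no triple is pairwise coprime, so Lemma~\ref{good} gives
\[
\delta(A)=\frac{1}{2}+\sum_{j=1}^{\alpha}\frac{1}{q^{j}}<\frac{1}{2}+\frac{1}{q-1}\le 1,
\]
since $q\ge 3$. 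The cases $n=2$ and $n=2^{\alpha_p}$ are immediate.

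For $p\ge 3$ I would fall back on the weaker but product-friendly inequality $\delta(A)\le T(n):=\sum_{d\mid n,\,d>1}1/d$, which is in fact what the argument in the proof of Lemma~\ref{P1} establishes for any congruence set. Identity~\eqref{EulerProd} rewrites this as $T(n)+1=\prod_{i}\sum_{j=0}^{\alpha_i}p_i^{-j}$, and using $\sum_{j=0}^{\alpha_i}p_i^{-j}<p_i/(p_i-1)$ together with Lemma~\ref{nice}'s caps $s\le p-1$ (Case 1) and $s\le p-2$ (Case 2) on the number of prime factors $q_i>p$, the claim $T(n)<1$ reduces to verifying the Euler-product inequalities
\[
\frac{p+1}{p}\prod_{i=1}^{p-1}\frac{q_i}{q_i-1}<2 \quad\text{(Case 1)},\qquad \frac{p}{p-1}\prod_{i=1}^{p-2}\frac{q_i}{q_i-1}<2\quad\text{(Case 2)},
\]
where $q_i$ denotes the $i$-th prime greater than $p$. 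Either bound then forces $\delta(A)\le T(n)<1$, contradicting $\delta(A)=1$.

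The main obstacle is establishing these product inequalities uniformly across all $p\ge 3$. For small primes they follow from direct computation (for instance $p=3$ yields $(4/3)(5/4)(7/6)=35/18<2$, and one checks $p=5,7,11,\ldots$ analogously); for large $p$, Mertens' third theorem gives $\prod_{p<q\le x}q/(q-1)\sim\log x/\log p$, and combined with the PNT-style bound $q_{p-1}=O(p\log p)$ this makes the Case~1 product of size $1+O(\log\log p/\log p)$, well under $2p/(p+1)\to 2$. Splicing the numerical and asymptotic regimes cleanly---ideally using explicit Rosser--Schoenfeld-type constants to eliminate any gap---is where the bulk of the technical work lies.
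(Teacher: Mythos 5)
Your reduction is the same as the paper's up to the final step, but at that final step there is a genuine gap: you leave the inequalities
\[
\frac{p+1}{p}\prod_{i=1}^{p-1}\frac{q_i}{q_i-1}<2,
\qquad
\frac{p}{p-1}\prod_{i=1}^{p-2}\frac{q_i}{q_i-1}<2
\]
unproven for general $p$, proposing to handle them by Mertens' theorem plus explicit Rosser--Schoenfeld constants for large $p$ and case-checking for small $p$, and you yourself flag the splicing of the two regimes as "the bulk of the technical work." As written, that is not a proof, and the analytic machinery is unnecessary. The missing idea is an elementary telescoping bound: the $q_i$ are distinct increasing primes exceeding $p$, so $q_i\geq p+i$ in Case 2, and when $n$ is odd they exceed $p+1$, so $q_i\geq p+1+i$ in Case 1. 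Since $x/(x-1)$ is decreasing, $\frac{q_i}{q_i-1}\leq\frac{p+1+i}{p+i}$ (resp.\ $\frac{p+i}{p+i-1}$), and the products collapse:
\[
\frac{p+1}{p}\cdot\frac{p+2}{p+1}\cdots\frac{2p}{2p-1}=2,
\qquad
\frac{p}{p-1}\cdot\frac{p+1}{p}\cdots\frac{2p-2}{2p-3}=2.
\]
This gives the bound $\leq 2$ rather than $<2$, but that suffices because the strict inequality you need, $\sum_{d\mid n,\,d>1}1/d<1$, already comes from the strict inequality $\sum_{j=0}^{\alpha_i}q_i^{-j}<q_i/(q_i-1)$ when you pass to the infinite geometric series. (Note your Case 1 product inequality as stated, with strict $<2$, would also require observing $q_i\geq p+2i$ or similar, another reason to place the strictness earlier as the paper does.)

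Everything else in your proposal matches the paper: the appeal to Lemma~\ref{nice} to restrict the shape of $n$, the exact density computation via Lemma~\ref{good} for $n=2q^{\alpha}$ (which is genuinely needed there, since $\sum_{d\mid 2q^{\alpha},\,d>1}1/d$ can exceed $1$), and the use of $\delta(A)\leq\sum 1/d_i$ with identity~\eqref{EulerProd} elsewhere. One small organizational remark: the paper does not need a separate $n=2^{\alpha_p}$ subcase, since the Case 2 telescoping argument with $s=0$ (empty product) already yields $-1+\sum_{j=0}^{\alpha_p}2^{-j}<1$; your treatment of it as "immediate" is fine but should be stated rather than waved at.
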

\begin{proof}
Let $p$ be the smallest prime that divides $n$. As noted above, there are two cases for a non-intersecting $n$, depending on whether $p^2|n$. 

\smallskip

\noindent\textbf{Case \ref{item:Case1} ($n=pq_1^{\alpha_1}\ldots q_{s}^{\alpha_{s}}):$}
We split into two cases based on the parity of $n$.
\begin{itemize}
    \item $\textbf{Case 1a ($n$ odd)}:$

Observe that $\sum_{d|n,d>1}\frac{1}{d}=-1+\sum_{d|n}\frac{1}{d}$. Then by \eqref{EulerProd}, 
$$\sum_{d|n,d>1}\frac{1}{d}=-1+\left(1+\frac{1}{p}\right)\prod_{i=1}^s \sum_{j=0}^{\alpha_i}\frac{1}{q_i^j}< -1+\frac{p+1}{p}\prod_{i=1}^s \frac{q_i}{q_i-1}.$$ Note that $\frac{x}{x-1}$ is a decreasing function for $x>1$. As $n$ is odd, we have $q_i>p\geq 3$ and so $q_i>p+1$ for $1\leq i\leq s$. We can then bound $\frac{q_i}{q_i-1}$ above by $\frac{p+1+i}{p+i}$ and as $s\leq p-1$,
$$ \sum_{d|n,d>1}\frac{1}{d}<-1+\frac{p+1}{p}\cdot\frac{p+2}{p+1}\cdot\frac{p+3}{p+2}\cdot\ldots\cdot\frac{p+(p-1)+1}{p+(p-1)}= -1+2=1. $$

Therefore by Lemma \ref{P1}, $n$ is not a covering set.

 \item $\textbf{Case 1b ($n$ even)}:$

    So $p=2$ and either $n=2$ or $n=2q^{\alpha}$ for a prime $q\geq 3$. If $n=2$ then clearly $$\sum_{d|n,d>1}\frac{1}{d}=\frac{1}{2}<1,$$ and so by Lemma \ref{P1}, $n$ is not a covering set.
    
    If $n=2q^{\alpha}$ then by Lemma \ref{good}, the density of integers covered by a CD congruence set, $A$, with moduli that are the divisors of $n$ is 
    $$\delta(A)=\left(\frac{1}{2}+\sum_{i=1}^\alpha \frac{1}{q^i}+\sum_{i=1}^\alpha \frac{1}{2q^i}\right)-\sum_{i=1}^\alpha \frac{1}{2q^i}=\frac{1}{2}+\sum_{i=1}^\alpha \frac{1}{q^i},$$
     as the only pairs of divisors, $(d_1,d_2)$, of $n$ such that $\gcd(d_1,d_2)=1$ is $(2,q^i)$ for $1\leq i\leq \alpha$. Then,
    $$\delta(A)<\frac{1}{2}+\sum_{i=1}^\infty \frac{1}{q^i}=\frac{1}{2}+\frac{1}{q-1}\leq \frac{1}{2}+\frac{1}{3-1}=1.$$
    As $\delta(A)<1$, we see that $n$ is not a covering set.
\end{itemize}

\smallskip

\noindent    \textbf{Case \ref{item:Case2} ($n=p^{\alpha_p}q_1^{\alpha_1}\ldots q_{s}^{\alpha_{s}}$):}

As before we use \eqref{EulerProd},
$$
\sum_{d|n,d>1}\frac{1}{d}=-1+\left(\sum_{j=0}^{\alpha_p}\frac{1}{p^j}\right)\prod_{i=1}^s\sum_{j=0}^{\alpha_i}\frac{1}{q^j} <-1+\left(\sum_{j=0}^{\infty}\frac{1}{p^j}\right)\prod_{i=1}^s\sum_{j=0}^{\infty}\frac{1}{q^j}.$$ As $\frac{x}{x-1}$ is a decreasing function and $q_i>p$ for $1\leq i\leq s$, we can bound $\frac{q_i}{q_i-1}$ above by $\frac{p+i}{p+i-1}$. Therefore $s\leq p-2$ implies $$\sum_{d|n,d>1}\frac{1}{d}<-1+\frac{p}{p-1}\prod_{i=0}^s \frac{q_i}{q_i-1}\leq -1+\frac{p}{p-1}\cdot \frac{p+1}{p}\cdot\ldots\cdot\frac{p+(p-2)}{p+(p-2)-1}=1.$$
So by Lemma \ref{P1}, $n$ is not a covering set.
\end{proof}

\section{Non-intersecting \texorpdfstring{$n$}{n}}

It was also asked in \cite{erdHos1980old}, for a given $n$ and any CD congruence set of $n$, what is the minimum density of integers that do not satisfy any of the congruences in the set. This is equivalent to asking what is the maximum density of integers that satisfy a congruence in the set. Lemma \ref{good} gives this density for any non-intersecting $n$. 

It remains to confirm which $n$ have divisors that are the moduli of a CD congruence set. We answer this question for some families of integers. 

Lemma \ref{nice} goes part way to answering this, giving a necessary condition for $n$ to be non-intersecting. We show that if $n=p^k$ or $n=qp^k$ for $k\geq 1$ and primes $p,q$, then $n$ is non-intersecting.

\begin{proposition}
    Let $n=p^k$ for a prime $p$ and $k\geq 1$. Then $n$ is non-intersecting, and the density of any CD congruence set of $n$ is $\frac{p^k-1}{p^k(p-1)}$.
\end{proposition}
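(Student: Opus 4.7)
The divisors of $n=p^k$ greater than $1$ are $p,p^2,\ldots,p^k$, and crucially any two of them share the factor $p$, so no two are coprime. This simplifies matters enormously in two ways simultaneously: first, a CD congruence set with these moduli must have \emph{no} two congruences overlap (the CD condition degenerates into pairwise disjointness); second, in the expression from Lemma \ref{good} every sum beyond the first is empty, so only $\sum_{i=1}^k 1/p^i$ survives.

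My plan is therefore to split the statement into an existence half and a density half, and dispatch the density half immediately by the observation above: any CD congruence set with moduli $p,p^2,\ldots,p^k$ has density
\[
\sum_{i=1}^k \frac{1}{p^i}=\frac{1-p^{-k}}{p-1}=\frac{p^k-1}{p^k(p-1)},
\]
regardless of which residues are chosen, because the remaining sums in \eqref{1} index over pairwise coprime subsets of $\{p,\dots,p^k\}$ of size at least $2$, of which there are none.

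For the existence half, I need to exhibit residues $a_{p^i}$ such that $a_{p^i}\mod p^i$ and $a_{p^j}\mod p^j$ never overlap for $i<j$. Since $p^i\mid p^j$, this is just the condition $a_{p^i}\not\equiv a_{p^j}\pmod{p^i}$. The cleanest explicit choice is $a_{p^i}=p^{i-1}$ for $1\le i\le k$. For $i<j$ we have $p^i\mid p^{j-1}=a_{p^j}$, so $a_{p^j}\equiv 0\pmod{p^i}$, while $a_{p^i}=p^{i-1}$ is a nonzero residue mod $p^i$; hence $a_{p^i}\not\equiv a_{p^j}\pmod{p^i}$. This verifies that the set $\{p^{i-1}\bmod p^i:1\le i\le k\}$ is a CD congruence set of $n$, so $n$ is non-intersecting.

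There is no real obstacle here: the only thing to notice is the structural point that pairwise non-coprime moduli force the CD condition to coincide with pairwise non-overlap, which makes both halves of the claim almost immediate. If one wanted to avoid writing down an explicit construction, one could also argue inductively that for each $i$ there are only finitely many forbidden residues mod $p^i$ for $a_{p^i}$ (namely those that coincide mod $p^j$ with some previously chosen $a_{p^j}$ for $j<i$), and $p^i$ exceeds this count, but the explicit choice $a_{p^i}=p^{i-1}$ is shorter.
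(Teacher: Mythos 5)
Your proposal is correct and follows essentially the same route as the paper: the same explicit choice of residues $a_{p^i}=p^{i-1}$, and the same observation that no two moduli are coprime so only the first sum in \eqref{1} survives. The only cosmetic difference is that you verify non-overlap via the condition $a_{p^i}\not\equiv a_{p^j}\pmod{p^i}$, whereas the paper argues via the highest power of $p$ dividing a putative common solution; both are immediate.
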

\begin{proof}
The set $\{ p^{i-1}\mod p^i \,:\, 1\leq i\leq k\}$ is CD as no integer is equivalent to two congruences from the set. To see this, let $x\equiv p^{i-1}\mod p^i$ and $x\equiv p^{j-1}\mod p^j$ for $i\neq j$. The first congruence implies that $p^{i-1}$ is the highest power of $p$ that divides $x$ and similarly, the second congruence implies that $p^{j-1}$ is the highest power of $p$ that divides $x$. This means that $i=j$, which is a contradiction.

By putting the divisors of $n$ into \eqref{1}, we see that the result equals $$\sum_{i=1}^{k}\frac{1}{p^i}=\frac{p^k-1}{p^k(p-1)},$$ and it follows from Lemma \ref{good} that this is the density of integers that satisfy a congruence in the set.  
\end{proof}

\begin{proposition}
    Let $n=qp^k$ for distinct primes $p,q$ and $k\geq 1$. Then $n$ is non-intersecting if and only if $k=1$ or $p>2$, and the density of any CD congruence set of $n$ is $\frac{p^k-1}{p^k(p-1)}+\frac{1}{q}$.
\end{proposition}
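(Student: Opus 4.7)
The plan is to split the statement into three parts: computing the density, proving impossibility when $p=2$ and $k\geq 2$, and constructing a witness when $k=1$ or $p\geq 3$.

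For the density, the divisors of $n$ greater than $1$ are $p,p^2,\ldots,p^k,q,qp,\ldots,qp^k$, and the only coprime pairs among these are $(p^i,q)$ for $1\leq i\leq k$; no three can be pairwise coprime since any two $p$-powers share $p$ and any two divisors containing $q$ share $q$. Substituting into \eqref{1} and noting that the $\frac{1}{qp^i}$ contributions from $\sum_{d\mid n,d>1}\frac{1}{d}$ cancel the $\frac{1}{p^i\cdot q}$ terms from the coprime-pair sum, the density simplifies to $\sum_{i=1}^{k}\frac{1}{p^i}+\frac{1}{q}=\frac{p^k-1}{p^k(p-1)}+\frac{1}{q}$, and by Lemma \ref{good} this value is independent of the chosen residues.

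For the necessity direction, if $p=2$ and $k\geq 2$ then $n/p=2^{k-1}q$ has two distinct prime divisors, violating Lemma \ref{nice}'s requirement that $n/p$ have fewer than $p=2$ distinct prime divisors; hence $n$ is not non-intersecting.

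For sufficiency when $p\geq 3$, my proposal is to set $a_{p^i}=p^{i-1}$ (as in the previous proposition), $a_q=0$, and $a_{qp^i}$ determined via the Chinese Remainder Theorem from $a_{qp^i}\equiv 2p^{i-1}\pmod{p^i}$ and $a_{qp^i}\equiv 1\pmod{q}$. The hypothesis $p\geq 3$ is used precisely to ensure that $0$, $p^{i-1}$, and $2p^{i-1}$ are three distinct residues modulo $p^i$ and that $2p^{j-1}\not\equiv 0\pmod{p^j}$. The main task is then to verify each non-coprime pair: the $a_{p^i}$ are pairwise non-overlapping by the previous proposition; $a_q\not\equiv a_{qp^i}\pmod{q}$ is immediate; comparing $a_{p^i}$ and $a_{qp^j}$ modulo $p^{\min(i,j)}$ yields one of the pairs $(p^{i-1},2p^{i-1})$ when $i=j$, $(p^{i-1},0)$ when $i<j$, or $(0,2p^{j-1})$ when $i>j$, each a pair of distinct residues under $p\geq 3$; and for $a_{qp^i},a_{qp^j}$ with $i<j$ the residues agree modulo $q$ but differ modulo $p^i$ since $2p^{j-1}\equiv 0\not\equiv 2p^{i-1}\pmod{p^i}$. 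The boundary case $p=2,k=1$ is handled separately by taking $a_2=1$, $a_q=0$, and $a_{2q}$ via CRT with $a_{2q}\equiv 0\pmod{2}$ and $a_{2q}\equiv 1\pmod{q}$, whose only two non-coprime pairs are trivially verified. I expect the case analysis for sufficiency, especially tracking the reductions modulo $p^{\min(i,j)}$ across all pairings, to be the technically most involved step; the density and necessity parts are immediate consequences of Lemmas \ref{good} and \ref{nice}.
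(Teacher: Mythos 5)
Your proposal is correct and follows essentially the same route as the paper: Lemma \ref{nice} gives the impossibility when $p=2$ and $k\geq 2$, an explicit residue assignment verified pair-by-pair via $p$-adic valuations and the Chinese Remainder Theorem gives sufficiency, and Lemma \ref{good} with the observation that the only coprime pairs of divisors are $(p^i,q)$ gives the density. The only differences are cosmetic: the paper's witness is $0\bmod q$, $p^{i-1}+1\bmod p^i$, $aqp^{j-1}+1\bmod qp^j$ with $a\not\equiv 0,q^{-1}\pmod p$, and it disposes of $n=2q$ by relabelling so that $p$ is the larger prime, whereas you take the residues $p^{i-1}$, $0$, and the CRT combination of $2p^{i-1}$ and $1$, treating $n=2q$ as a separate (trivially checked) case; both verifications go through.
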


\begin{proof}
Let $n=qp^k$ for distinct primes $p,q$ and $k\geq 1$. When $k\geq 2$ and $p=2$, we see that $\frac{n}{2}=2^{k-1}q$ has $2$ distinct prime divisors, so by Lemma \ref{nice} it follows that $n$ is not non-intersecting.

Now consider the case where $k=1$ or $p>2$. When $k=1$, we see that $n$ is the product of two distinct primes, $p$ and $q$, and without loss of generality we assume $p$ is the larger prime. Then $n=pq$ and $p>q\geq 2$. So in either case, we can assume that $p>2$.

Let $p>2$ and $a$ be an integer such that $a\not \equiv 0,q^{-1}\mod p$ (this is possible as $p\geq 3$). Then let the congruence set be $$0\mod q, \{p^{i-1}+1\mod p^i\,:\, 1\leq i\leq k\}, \{aqp^{j-1}+1\mod qp^j\,:\, 1\leq j\leq k\}.$$ We claim that this is a CD congruence set. Clearly if $x\equiv aqp^{j-1}+1\mod qp^j$ and $x\equiv 0\mod q$ then $q|1$ which is a contradiction. It remains to check whether the two sets of congruences overlap with each other. 

First, we assume that $p^{i-1}+1\mod p^i$ and $p^{j-1}+1\mod p^j$ overlap for $i\neq j$. Therefore $$p^{i-1}+rp^i=p^{j-1}+tp^j$$ for $r,t\in \mathbb{Z}$. Then the highest power of $p$ that divides the LHS is $p^{i-1}$ and the highest power of $p$ that divides the RHS is $p^{j-1}$, which is a contradiction.

Next, we assume that $aqp^{i-1}+1\mod qp^i$ and $aqp^{j-1}+1\mod qp^j$ overlap for $i\neq j$. Therefore $$aqp^{i-1}+rqp^i=aqp^{j-1}+tqp^j$$ for $r,t\in \mathbb{Z}$. Then, as $a\not \equiv 0\mod p$, the highest power of $p$ that divides the LHS is $p^{i-1}$ and the highest power of $p$ that divides the RHS is $p^{j-1}$. This is a contradiction.

Finally, we assume that $p^{i-1}+1\mod p^i$ and $aqp^{j-1}+1\mod qp^j$ overlap. Then $$p^{i-1}+rp^i=aqp^{j-1}+tqp^j$$ for $r,t\in \mathbb{Z}$. So $p^{i-1}(1+rp)=qp^{j-1}(a+tp)$. Then the highest power of $p$ that divides the LHS is $p^{i-1}$ and the highest power of $p$ that divides the RHS is $p^{j-1}$. It follows that $i=j$ and so $(1+rp)=q(a+tp)$. However, this implies that $1\equiv qa\mod p$, which contradicts $a\not\equiv q^{-1}\mod p$. 

So the divisors of $qp^k$ are the moduli of a CD congruence set. By putting the divisors of $n$ into $\eqref{1}$, we see that the result equals \begin{align*}-1+\left(1+\frac{1}{q}\right)\sum_{i=0}^k \frac{1}{p^i}-\sum_{i=1}^k \frac{1}{qp^i}&=-1+\frac{q+1}{q}+\left(1+\frac{1}{q}\right)\sum_{i=1}^k \frac{1}{p^i}-\sum_{i=1}^k \frac{1}{qp^i}\\ &=\frac{1}{q}+\frac{p^k-1}{p^k(p-1)}, \end{align*} and it follows from Lemma \ref{good} that this is the density of integers that satisfy a congruence in the set.  
\end{proof}

By Lemma \ref{nice}, this covers all even $n$ whose divisors are the moduli of a CD congruence set.

\section{Conclusion}

The next step would be to provide a necessary and sufficient characterisation for $n$ to be non-intersecting.

By Theorem \ref{T1}, if the divisors of $n$ are used in \eqref{1} and the answer is greater than or equal to 1, then $n$ is not non-intersecting. However, this is not an if and only if condition as can be seen by considering the divisors of $n=20$. Using $\{2,4,5,10,20\}=\{d_1,d_2,d_3,d_4,d_5\}$ in \eqref{1} gives a result of $\frac{19}{20}$. At the same time, Lemma \ref{nice} shows that as $\frac{20}{2}=10$ has more than 1 prime factor, $n$ cannot be non-intersecting. 

We conjecture that Lemma \ref{nice} is a sufficient condition for $n$ to be non-intersecting, as well as a necessary one. 

\begin{conjecture}
    Let $p$ be the smallest prime that divides $n$. If $\frac{n}{p}$ has less than $p$ distinct prime divisors, then $n$ is non-intersecting. 
\end{conjecture}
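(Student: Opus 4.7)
The plan is to construct an explicit CD congruence set whose moduli are the divisors of $n$ greater than $1$. Write $n = p^{\alpha_p} q_1^{\alpha_1} \cdots q_s^{\alpha_s}$ with $p < q_1 < \cdots < q_s$; the hypothesis of the conjecture places us in one of the two cases delineated before Theorem~\ref{T1}, namely $\alpha_p = 1$ with $s \leq p - 1$, or $\alpha_p \geq 2$ with $s \leq p - 2$. Index the prime divisors of $n$ by $\iota(p) = 0$ and $\iota(q_i) = i$. For each divisor $d > 1$, let $T(d)$ denote the set of primes dividing $d$, and let $\sigma_d(r)$ denote the cyclic successor of $r$ in $T(d)$ when its elements are listed in increasing order (with $\sigma_d(r) = r$ if $T(d) = \{r\}$, and the largest element wrapping to the smallest otherwise). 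I would then define $a_d$ modulo $d$ by specifying, for each $r \in T(d)$,
$$a_d \equiv r^{\mathrm{ord}_r(d) - 1}\bigl(\iota(\sigma_d(r)) + 1\bigr) \pmod{r^{\mathrm{ord}_r(d)}},$$
and extending via the Chinese Remainder Theorem.

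The coefficient $\iota(\sigma_d(r)) + 1$ lies in $\{1, 2, \ldots, s+1\}$. I would next verify that it is nonzero and injective (as a function of $\sigma_d(r) \in T(d)$) modulo $r$ in every relevant case: for $r = q_i$ we have $q_i > p \geq s+1$, so $q_i \geq s+2$; for $r = p$ in Case 2 we have $s+1 \leq p-1$; and for $r = p$ in Case 1, since $\alpha_p = 1$ only $\mathrm{ord}_p(d) = 1$ arises, so nonzeroness is not needed and the bound $s+1 \leq p$ suffices for injectivity modulo $p$.

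To verify the CD property I would argue by contradiction: suppose $d \neq d'$ with $\gcd(d, d') > 1$ satisfy $a_d \equiv a_{d'} \pmod{\gcd(d, d')}$. Then for every $r \in T(d) \cap T(d')$ we have $a_d \equiv a_{d'} \pmod{r^{\min(\mathrm{ord}_r(d), \mathrm{ord}_r(d'))}}$. A direct $r$-adic calculation from the defining formula shows this forces (a) $\mathrm{ord}_r(d) = \mathrm{ord}_r(d')$---otherwise the side with the larger valuation reduces to $0$ modulo $r^{\min}$ while the other, with leading coefficient nonzero modulo $r$, does not---and then (b) $\sigma_d(r) = \sigma_{d'}(r)$, by the injectivity of the coefficient modulo $r$.

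The heart of the argument is then a short combinatorial lemma: if $T \neq T'$ are nonempty finite sets with $T \cap T' \neq \emptyset$, then some $r \in T \cap T'$ has different cyclic successors in $T$ and in $T'$. Iterating the (assumed common) successor starting from any $r_0 \in T \cap T'$ produces a sequence lying entirely in $T \cap T'$ which traces out the full cyclic orbit in $T$---enumerating all of $T$---and simultaneously the full cyclic orbit in $T'$, enumerating all of $T'$; hence $T = T'$, a contradiction. Applied with $T = T(d), T' = T(d')$ together with (a), this yields $d = d'$, the desired contradiction. The main obstacle is aligning the $s+1$ possible cyclic-successor values with the available residue budget modulo each prime; this is precisely what the hypothesis that $n/p$ has fewer than $p$ prime divisors provides, with the Case~2 inequality $s \leq p - 2$ supplying the extra unit needed for the leading-digit trick that separates divisors of differing $p$-adic valuations.
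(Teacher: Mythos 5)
The statement you were asked to prove is left \emph{open} in the paper: it appears there only as a conjecture, with the author proving just the special cases $n=p^k$ and $n=qp^k$ (Propositions~4.1 and~4.2) and noting that a proof of the general case has been claimed elsewhere. So there is no in-paper proof to compare against; judged on its own merits, your construction appears to be correct and complete. The two hypotheses of the conjecture translate exactly into your bounds $s\le p-1$ (when $\alpha_p=1$) and $s\le p-2$ (when $\alpha_p\ge 2$), which is precisely what makes the coefficients $\iota(\sigma_d(r))+1\in\{1,\dots,s+1\}$ pairwise distinct modulo every prime $r\mid n$, and nonzero modulo $r$ whenever two divisors can actually differ in their $r$-adic valuation (the only exception, $r=p$ in Case~1, is harmless since all $p$-valuations are then equal to $1$). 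Your valuation step (a), the injectivity step (b), and the cyclic-successor lemma (iterating the common successor from any element of $T\cap T'$ sweeps out all of $T$ and all of $T'$, forcing $T=T'$) are all sound, and together they do force $d=d'$, establishing the CD property. Your scheme also specializes to the paper's constructions in spirit: for $n=p^k$ it reproduces $p^{i-1}\bmod p^i$ exactly, and for $n=qp^k$ it plays the same leading-$p$-adic-digit game as the paper's $aqp^{j-1}+1\bmod qp^j$ trick, but with the successor-encoding making it scale to arbitrarily many primes. The one place I would ask you to write out in full is the ``direct $r$-adic calculation'': state explicitly that when $\operatorname{ord}_r(d)<\operatorname{ord}_r(d')$ one has $a_{d'}\equiv 0\pmod{r^{\operatorname{ord}_r(d)}}$ while $a_d\equiv r^{\operatorname{ord}_r(d)-1}c\not\equiv 0$ because $r\nmid c$, and justify $r\nmid c$ separately for $r=q_i$ (using $q_i\ge s+2$) and for $r=p$ in Case~2 (using $s+1\le p-1$).
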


A proof of this conjecture has been claimed in \cite{jia2025resolving}.

A further avenue is to characterise when $\{d_1,\ldots,d_t\}$ are the moduli of a CD congruence set. A similar argument to Lemma \ref{nice} implies that if $\{d_1,\ldots,d_t\}$ are the moduli of a CD congruence set then, for any integer $m$, at most $m$ moduli form a set where the terms have pairwise gcd equal to $m$. Stijn Cambie has constructed examples that show this is not a sufficient condition for $\{d_1,\ldots,d_t\}$ to be non-intersecting, e.g. $\{3,6,12,18,30,42\}$. 

A more general question is asked in \cite{erdHos1980old}: what is the maximum density of integers covered by a congruence set with a given set of moduli $\{d_1,\ldots,d_t\}$?

\bibliographystyle{plain}
\bibliography{Latex}

\end{document}